\newcommand{\R}{\ensuremath{\mathbbm{R}}}
\newcommand{\E}{\ensuremath{\mathcal{E}}}
\newcommand{\A}{\ensuremath{\mathcal{A}}}
\newcommand{\B}{\ensuremath{\mathcal{B}}}
\newcommand{\abs}[1]{\ensuremath{\left| #1 \right|}}
\newcommand{\noz}{\# 0}
\newcommand{\noo}{\# 1}
\newcommand{\Ekeep}{\ensuremath{{\mathcal E}_{keep}}}
\newcommand{\Egood}{\ensuremath{{\mathcal E}_{good}}}
\newcommand{\Ebad}{\ensuremath{{\mathcal E}_{bad}}}
\newcommand{\indicator}{\ensuremath{\mathds 1}} 
\newcommand{\Exp}{\ensuremath{\mathbbm{E}}}
\newcommand{\ooea}{$(1 + 1)$-EA\xspace}
\newcommand{\hottopic}{\textsc{HotTopic}\xspace}
\newcommand{\Uset}{\ensuremath{\mathcal{U}}}
\renewcommand{\epsilon}{\varepsilon}
\newcommand{\eps}{\varepsilon}
\newtheorem{theorem}{Theorem}[section]
\newtheorem{proposition}[theorem]{Proposition}
\theoremstyle{definition}
\begin{document}

\title[When Does Hillclimbing Fail on Monotone Functions]{When Does Hillclimbing Fail on Monotone Functions: An entropy compression argument}
\author{Johannes Lengler$^*$}
\author{Anders Martinsson$^*$}
\author{Angelika Steger$^*$}
\address{$^*$Department of Computer Science, ETH Z\"{u}rich, Switzerland}
\maketitle

\begin{abstract}
Hillclimbing is an essential part of any optimization algorithm. An important benchmark for hillclimbing algorithms on pseudo-Boolean functions $f: \{0,1\}^n \to \R$ are (strictly) \emph{montone} functions, on which a surprising number of hillclimbers fail to be efficient. For example, the $(1+1)$-Evolutionary Algorithm is a standard hillclimber which flips each bit independently with probability $c/n$ in each round. Perhaps surprisingly, this algorithm shows a phase transition: it optimizes any monotone pseudo-boolean function in quasilinear time if $c<1$, but there are monotone functions for which the algorithm needs exponential time if $c>2.2$. But so far it was unclear whether the threshold is at $c=1$.

In this paper we show how Moser's entropy compression argument can be adapted to this situation, that is, we show that a long runtime would allow us to encode the random steps of the algorithm with less bits than their entropy. Thus there exists a $c_0 > 1$ such that for all $0<c\le c_0$ the $(1+1)$-Evolutionary Algorithm with rate $c/n$ finds the optimum in $O(n \log^2 n)$ steps in expectation. 
\end{abstract}


\section{Introduction}

Hillclimbing is an essential part of any optimization algorithm. The \emph{$(1+1)$-Evolutionary Algorithm} or $(1+1)$-EA is a simple greedy hillclimbing scheme for maximizing a pseudo-Boolean objective function $f:\{0, 1\}^n\rightarrow \mathbb{R}$. We start with a search point $X_0 \in \{0, 1\}^n$ uniformly at random. In the $t$-th round we create an \emph{offspring} $X'$ from the \emph{parent} $X_t$ by flipping each bit of $X_t$ independently with probability $c/n$, where $c$ is the \emph{mutation parameter}. Then we replace the current search point by $X'$ if it has at least the same objective, i.e., we set $X_{t+1} := X'$ if $f(X') \geq f(X_{t})$, and $X_{t+1} := X_t$ otherwise. The phrase $(1+1)$ reflects that in each round the next search point is chosen from one parent plus one offspring. It is clear that, on any function $f$ with a unique global maximum, the $(1+1)$-EA will eventually fixate at at this maximum of $f$.

Here we study the performance of this algorithm on (strictly) monotone functions. A function $f:\{0, 1\}^n\rightarrow\mathbb{R}$ is said to be \emph{monotone}\footnote{We define ``monotone'' in a way that otherwise might rather be called ``strictly monotone'', for ease of terminology. Note that we can't expect efficient runtimes for functions which are monotone in a non-strict sense. For example, we could have $f(x) = 0$ for $x\not\equiv 1$ and $f(1)=1$ otherwise and the search for the optimal solution amounts to searching a needle in a hay stack. Therefore, we define monotone functions in this strict sense in our paper.
} if $f(x)<f(y)$ whenever $x\neq y$ and $x^i\leq y^i$ for all $i\in[n]$, where $x^i$ denote the $i$-th coordinate of $x$. For any such function, the unique global maximum is the all-ones string. Monotone functions are an important class of benchmark functions for hillclimbing schemes, since there exist a large variety of hillclimbing schemes that optimize all monotone functions efficiently. For example, the $(1+1)$ algorithm that creates the offspring by flipping exactly one random bit in each round resembles a coupon collector process, and thus finds the optimum in time $O(n\log n)$.\footnote{This algorithm is called \emph{Random Local Search}. Despite its good behaviour on monotone functions it has severe limitations in practice. For example, other than the \ooea it can't escape local optima and is highly susceptible to any form of noise.} Nevertheless, a surprising number of hillclimbing schemes fail on some monotone functions, see~\cite{lengler2018general} for an overview. 

For the \ooea, while for any constant $c<1$ it is easy to see that the algorithm needs time $O(n\log n)$ to find the optimum of any monotone function~\cite{DoerrJSWZ10}, it was shown in a sequence of papers~\cite{DoerrJSWZ10,doerr2013mutation,lengler2016drift} that for $c > 2.13\ldots$ there are monotone functions (dubbed \hottopic functions in~\cite{lengler2018general}) on which the algorithm needs exponential time. The standard proof techniques for upper runtime bounds fail precisely at $c=1$, and there were split opinions in the community on whether there should be a phase transition from polynomial to exponential at $c=1$~\cite{kotzingpersonal}. On the presumed threshold $c=1$ it follows from a more general model of Jansen~\cite{jansen2007brittleness} that the runtime is $O(n^{3/2})$, but it remained unclear whether the runtime is quasilinear or not. 

The value $c=1$ is of special interest, for several reasons. From a practical perspective, it is considered the standard choice for the mutation parameter and explicitly recommended by textbooks on the subject~\cite{back1996evolutionary, back1997handbook}. From a theoretical perspective, $c=1$ is known to be the optimal parameter choice for linear functions, i.e., for functions of the form $f(X) = \sum_{i=1}^n w_iX^i$, where the $w_i$ are fixed weights. More precisely, the choice $c=1$ gives runtime $(1+o(1)) en\log n$ on any linear function, while any other (constant) choice of $c$ gives a strictly worse leading constant on any linear function~\cite{witt2013tight}.

In this paper we use an entropy compression argument to show that for $c = 1+\eps$ the runtime remains quasilinear. More precisely, we show that a long runtime would allow us to encode the random trajectory of the algorithm with fewer bits than its entropy, which is an information theoretic contradiction. This type of argument is attributed to Moser, who used the technique in his celebrated algorithmic proof of the Lov\'{a}sz local lemma~\cite{moser2009constructive,fortnow2009kolmogorow}. Since then, the method has been used to extend and apply the local lemma~\cite{moser2010constructive,achlioptas2016random}, and used to some extent for colouring problems~\cite{achlioptas2016focused,przybylo2016facial,esperet2013acyclic,grytczuk2013new,dujmovic2016nonrepetitive}. The same idea has been used for analysing optimal data structures, e.g., for cell probing~\cite{brody2015adapt,larsen2012cell} and for sampling~\cite{bringmann2013succinct}. Despite these results, and despite popular blogposts~\cite{tao2009moser,fortnow2009kolmogorow}, the method still does not seem to be widely known outside of these communities.

We use this technique to prove that no phase transition occurs at $c=1$. More precisely, we show the  following.
\begin{theorem}\label{thm:main}
There exists an $\varepsilon>0$ such that, for any (strictly) monotone function $f$ and any constant $0<c\leq 1+\varepsilon$, the \ooea with mutation parameter $c$ requires $O(n \log^2 n)$ steps until it finds the maximum of $f$, and it visits an expected number of $O(n)$ search points. The same remains true if the initial search point of the algorithm is chosen by an adversary.
\end{theorem}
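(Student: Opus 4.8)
The plan is to track the number of zero-bits $Z_t := n - \sum_i X_t^i$, whose unique minimum $Z_t = 0$ corresponds to the all-ones optimum. Monotonicity pins down the acceptance behaviour in two of the three cases for a mutation: writing $U_t$ for the number of zero-bits flipped up to one and $D_t$ for the number of one-bits flipped down to zero, a \emph{pure} step ($U_t \ge 1$, $D_t = 0$) yields $X' > X_t$ and is always accepted, while a \emph{losing} step ($U_t = 0$, $D_t \ge 1$) yields $X' < X_t$ and is always rejected. Only \emph{mixed} steps ($U_t \ge 1$, $D_t \ge 1$), in which $X'$ and $X_t$ are incomparable, have an acceptance that is genuinely at the mercy of $f$; here the adversary may accept steps that increase $Z_t$ (when $D_t > U_t$) and reject steps that would decrease it. The whole difficulty is concentrated in these mixed steps, and the regime that matters is $Z_t$ small, where $U_t \sim \Bin(Z_t, c/n)$ is typically $0$ or $1$ while $D_t \sim \Bin(n - Z_t, c/n)$ has mean close to $c > 1$, so that first-order drift is non-positive and the classical drift theorems give nothing.

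The quantity I would actually bound is $A$, the number of \emph{accepted moves} (steps on which the state genuinely changes); this is exactly the number of search points the algorithm visits, so the second assertion of the theorem is the statement $\Exp[A] = O(n)$. Summing the per-move change of $Z_t$ over the run gives the bookkeeping identity $\sum_{\text{moves}} U_t = Z_0 + \sum_{\text{moves}} D_t \le n + \sum_{\text{moves}} D_t$, so bounding $A$ reduces to bounding the total number of accepted down-flips $\sum_{\text{moves}} D_t$: if the adversary could force arbitrarily many accepted down-flips, the algorithm would have to redo arbitrarily much work. This is where I would deploy the entropy compression argument.

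Concretely, I would build an injective encoding of the sequence of mutation masks $(S_1, \dots, S_T)$ and compare its length to the Shannon self-information $-\log_2 \Pr[(S_1,\dots,S_T)]$, which no injective code can beat in expectation. On a rejected or idle step the state does not move and I record the mask at its full self-information (no gain, no loss); on an accepted move, however, the flipped positions are exactly the coordinates changed in the transition $X_t \mapsto X_{t+1}$, so it is enough to record the flip positions, at a cost of about $U_t \log_2 Z_t + D_t \log_2(n - Z_t)$ bits, rather than the full mask at its self-information. Since the self-information of the flips on that step is about $U_t \log_2(n/c) + D_t \log_2(n/c)$ plus the $\approx c \log_2 e$ contribution of the non-flipped coordinates, each accepted move saves roughly $U_t \log_2\!\big(n/(c Z_t)\big) - D_t \log_2 c$ bits relative to the entropy. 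The up-flip term is a genuine gain that is large precisely in the endgame where $Z_t$ is small, while the down-flip term is a penalty proportional to $\log_2 c = \log_2(1+\eps)$. Because an injective code has expected length at least the entropy, the expected total saving is non-positive; combining this inequality with the bookkeeping identity above and choosing $\eps$ small enough that the up-flip gains dominate the $\log_2(1+\eps)$ penalties yields $\Exp[A] = O(n)$.

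The main obstacle, and the reason $c$ must be bounded by some $c_0 > 1$ rather than allowed to grow, is exactly this balance: the per-move saving is positive only when the up-flip gain $U_t\log_2(n/(cZ_t))$ outweighs the down-flip penalty $D_t \log_2 c$, which can fail both when $c$ is large (recovering the $c > 2.13$ exponential-time examples) and, within a single run, while $Z_t$ is still of order $n$. I would therefore split the run at a threshold on $Z_t$, handle the bulk phase $Z_t = \Omega(n)$ by a direct drift/concentration estimate (where the slack from $c>1$ or the sheer abundance of zero-bits makes progress cheap), and reserve the compression argument for the critical endgame; care is needed to make the encoding genuinely injective across the adversarially-chosen acceptance pattern and to control the lower-order terms in the self-information uniformly in $Z_t$. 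Finally, the $O(n\log^2 n)$ bound on the number of steps follows from the $O(n)$ bound on moves by a waiting-time argument: between moves the process idles for a geometric number of steps with mean $\approx n/(cZ_t)$, and summing these waiting times over the levels visited, with the extra logarithmic factor coming from the small-$Z_t$ levels, gives the stated runtime. None of these estimates uses the randomness of the starting point, so the bounds hold verbatim for an adversarial $X_0$, using only $Z_0 \le n$.
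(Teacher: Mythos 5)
Your high-level framing (track the number of zeros, bound the number of accepted moves via a bookkeeping identity, get the step count from waiting times between moves, note that nothing uses the random start) matches the paper, but the heart of your argument --- the entropy compression step --- is set up in a way that cannot work. Your claimed per-move saving of roughly $U_t\log_2\bigl(n/(cZ_t)\bigr)$ comes from encoding the up-flipped positions as a subset of the $Z_t$ current zeros instead of at their unconditional self-information $U_t\log_2(n/c)$. But this saving is an illusion: it is exactly paid for by the information needed to say \emph{which} steps have $U_t\geq 1$ at all. Concretely, per step the event $U_t\geq 1$ has probability about $cZ_t/n$, so announcing whether it occurs (which any decodable forward encoding must do, e.g.\ via the unary header for $U_t$) costs on the order of $\tfrac{cZ_t}{n}\log_2\tfrac{n}{cZ_t}$ bits in expectation --- precisely the expected saving you collect on that step. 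Equivalently: your code is a prefix code for each i.i.d.\ mask given the current state, so its expected length per step is at least the per-step entropy, and the inequality ``expected code length $\geq$ entropy'' yields nothing. A decisive sanity check is that your argument never uses monotonicity beyond ``accepted moves have $U_t\geq 1$'' and would go through verbatim for $c=3$ after shrinking your endgame threshold $\delta$, contradicting the known exponential lower bound for \hottopic functions; so some step must fail, and it is this one.

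What the paper does instead, and what your sketch is missing, is twofold. First, the compression is applied to the \emph{update chain} only (no idle steps) and runs \emph{backwards}: the up-flipped bit is encoded as one of the one-bits of the \emph{next} state $Y_{t+1}$, of which there are many. The genuine saving then comes from a structural consequence of (strict) monotonicity: defining the value $val_{y+e_i}(j)=f(y+e_i)-f(y+e_i-e_j)$, an update is called \emph{bad} if $U_t=1$ and at least $(1-\alpha)n$ one-bits have strictly smaller value than the flipped zero-bit; for such updates the up-flipped bit is forced into a set of size at most $\alpha(\noo_{t+1}+D_t)$, saving $\log_2(1/\alpha)$ bits. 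The forward-vs-backward binomial coefficients telescope to $\log_2\binom{n}{k}=O(n)$, so the compression bounds $\Exp[T_{bad}]$ by a small multiple of $\Exp[T]$ plus $O(n)$. Second, one needs a drift statement (Proposition~\ref{prop:excex}) showing $\Exp[U-D\mid\Egood]>0$ for $c\leq 1/(1-\alpha)$ and $\Exp[U-D\mid\Ebad]\geq 1-c$; combining this with the bound on $T_{bad}$ and your bookkeeping identity gives $\Exp[T]=O(n)$. It is this interplay between a weight-sensitive notion of bad updates, a backward encoding, and a conditional drift estimate --- none of which appears in your proposal --- that makes the argument sensitive to $c$ being only slightly above $1$. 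Your reduction of the step count to the move count and the adversarial-start remark are fine and essentially identical to the paper's.
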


To be more precise, we show that there are constants $\eps, C>0$ such that for all $0<c\leq 1+\eps$ the runtime is at most $C/c \cdot n\log^2 n$, and the number of search points is at most $C \cdot n$.

For the proof, it turns out to be natural to not  measure performance in the number of time steps, but in the number of \emph{updates}, i.e. the number of times $X_{t+1} \neq X_t$. Note that because of the greedy nature of the algorithm, the number of updates coincides with the number of visited search points (minus one).

We first give some intuition on the behaviour of the algorithm and on our proof. For a general monotone function $f$, it is natural to measure the progress of the $(1+1)$-EA is terms of number of one-bits in the current search point. In order to make an update, it is necessary to flip at least one zero-bit into a one-bit, since otherwise the offspring would be rejected due to monotonicity (unless it is identical to the parent, in which case there is no update either). Thus, if the average update does not flip too many ones to zeros, the number of ones in the current search point will tend to $n$ efficiently. For a small mutation parameter (specifically for $c<1$), this is indeed true as the average number of ones flipped to zeros is at most $c$, and this remains true for update steps. For larger $c$, one might expect this to still hold as, intuitively, any offspring with more ones flipped to zeros than zeros flipped to ones should be unlikely to be fitter than its parent. However, the reason this intuition fails for sufficiently large $c$ is that one can ``trick'' the algorithm by weighing a fraction of the remaining zeros much higher than most ones already in the search point. Then, whenever one of these zeroes is flipped, the algorithm will happily keep the offspring, regardless of how many ones are flipped to zeros in the process.

Based on this intuition, we define \emph{good} and \emph{bad} updates. The bad updates capture cases in which a zero-bit with a disproportionally high weight is flipped. Then we study the entropy of the update steps of the algorithms. On the one hand, we will analyze the algorithm in a forward manner to give a lower bound on the entropy of each update. On the other hand, we give a backwards encoding (from last step to first) of the updates steps, and this encoding saves some bits in bad update steps. Since the expected number of bits needed for the encoding is lower bounded by the entropy, we get an upper bound for the expected number of bad update steps. This, in turn, gives us a linear upper bound for the expected number of update steps. Finally, the runtime bound follows by a slight refinement of the calculation, in which we compute how many steps we need to decrease the number of zero-bits from $2^k$ to $2^{k-1}$, for $k=\log n, \dots, 1$.

\section{Preliminaries: properties of single updates}
The idea of this section is to collect properties of single update steps. Throughout this section we will use the following notation. We assume that we are in an (arbitrary, but fixed) state $y\in\{0,1\}^n$. We also assume that $k$ denotes the number of ones in $y$. We denote by $Y'$ the string obtained by flipping each coordinate independently with probability $c/n$. With $\Ekeep$ we denote the event that $f(Y') \ge f(y)$,  corresponding to the event that~$y'$ is accepted as the new state.  We also denote by $U$ the number of bits that are zero in $y$, but one in $Y'$ (upflips) and by $D$ the number of bits that are one in $y$, but zero in $Y'$ (downflips).

In this section we will repeatedly make use of the following fact. 
Let $X$ be a random variable, and $\E$ be some event. With $\indicator_\E$ we denote the indicator variable for the event $\E$. Then by the law of conditional expectation,
\begin{equation}\label{eq:conditional:expectation}
\Exp[X|\E] = {\Exp[X\cdot \indicator_\E]} / {\Pr[\E]}.
\end{equation}

\subsection{Expected number of bits flips}
$\Exp[U+D]$ is easily computed by linearity of expectation to be equal to~$c$. However, we are interested in $\Exp[U+D\mid \Ekeep]$. Observe that $\Pr[\Ekeep]$ is at least the probability that we flip exactly one zero-bit to a one and no other bit. Thus, $\Pr[\Ekeep] \ge (n-k)\frac{c}n(1-\frac{c}n)^{n-1}\geq  \frac{n-k}n ce^{-c}$, where the latter step holds for all $0<c<2$ and $n$ sufficiently large, and follows from the expansion $1-c/n = e^{-c/n + 2c^2/n^2 +O(1/n^3)}$. Observe also that
$\indicator_{\Ekeep} \le \indicator_{\A_1}+\ldots+ \indicator_{\A_{n-k}}$, where
$\A_i$ denotes the event that we flip the $i$-th zero-bit to a one, where $1\le i\le n-k$. Thus \eqref{eq:conditional:expectation} implies that for $0<c<2$, and $n$ sufficiently large,
\begin{equation}\label{eq:UplusD}
\Exp[U+D\mid \Ekeep] 
\le \frac{\sum_{i=1}^{n-k}\Exp[(U+D)\cdot \indicator_{\A_i}]}{\Pr[\Ekeep]}
\le \frac{(n-k)\frac{c}n(1+c)}{ \frac{n-k}n ce^{-c}} = (1+c)e^c.
\end{equation}

\subsection{Change in the number of ones}\label{sec:number:ones}

Our goal in this section is to (lower) bound the change in the number of ones, i.e., to bound
$\Exp[U-D\mid \Ekeep]$.  Clearly,
$\Exp[U \mid \Ekeep] \ge 1$ by monotonicity. To bound $D$, note that it is intuitively clear that $\Exp[D \mid \Ekeep] \leq \Exp[D] \leq c$. A full proof can be found in the appendix. For $c<1$ we thus get from \eqref{eq:conditional:expectation} that $\Exp[U  - D | \Ekeep] \ge 1-c$. Standard drift arguments thus imply that the expected number of updates till  EA reaches the all-ones string is $O(n)$. 

In order to also be able to apply a similar argument for $c > 1$, we need to be more careful. What we will do is to partition updates into good and bad ones, i.e., we let $\Ekeep = \Egood \uplus \Ebad$ and define $\Ebad$ in such a way that  $\Ebad$ happens ``rarely'' {and}  $\Exp[U  - D | \Egood]$ is positive. 

To do so, observe that whenever $U=1$, say bit $i$ is flipped from a zero to a one, we can attribute a \emph{value} to each 1-bit in $y+e_i$ (that is, to the indices $j\in \{i\}\cup \{ a\in[n] : y^a=1\}$) according to $$val_{y+e_i}(j) := f(y+ e_i) - f(y + e_i - e_j),$$
where $e_1, e_2, \dots, e_n$ denotes the standard basis vectors, and $+$ and $-$ denotes vector addition and subtraction respectively. It is natural to think of this value as the ``cost'' of flipping bit $j$ to a zero. Indeed if $Y'$ is obtained from $y$ by flipping bit $i$ from a zero to a one, and bits $j_1, j_2, \dots$ from ones to zeros, then if at least one of the $j$-bits, say $j_1$, has a strictly higher value than $i$, we have
$$f(Y') \leq f(y+ e_i - e_{j_1}) = val_{y+e_i}(i)-val_{y+e_i}(j)<0,$$
which means such a $Y'$ will never be kept. Similarly, if $j_1$ has equal value to $i$, and this is not the only $1$-bit flipped to a zero, then $Y'$ will not be kept.

With this notion at hand we say that an update from $y$ to $Y'$ belongs to \Ebad\ iff 
\begin{enumerate}
\item $Y' \neq y$ and $f(Y') \leq f(y)$, i.e., we actually make an update,
\item there is exactly one zero-bit $i$ that is flipped, i.e, $U=1$, and
\item there exist at least $(1-\alpha) n$ one-bits in $y + e_i$  
whose value is strictly smaller than the value of bit $i$.
\end{enumerate}
To get some intuition behind this definition, observe that it indeed captures cases in which the number of ones may likely {\em decrease}: we only flip one bit from zero to one {\em and} there are many candidate  bits for which we may be able to flip two or more of them  back to zero.  This intuition is formalized by the following proposition.

\begin{proposition}\label{prop:excex} For any $0 \leq \alpha \leq \frac{1}{2}$ and $0 \leq c\leq 1/(1-\alpha)$, we have for all $n \geq 3$
\begin{align*} 
\mathbb{E}[U-D \mid \Ebad] &\geq 1-c,\\
\mathbb{E}[U-D \mid\Egood] &\geq \left(1-\frac{c}{n}\right)^{\alpha n} \left( 1-(1-\alpha)c\right) \geq e^{-2\alpha c} \left( 1-(1-\alpha)c\right).
\end{align*}
\end{proposition}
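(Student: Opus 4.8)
The plan is to derive both inequalities from one common mechanism: condition on the set $u$ of up-flipped bits, and exploit that, conditioned on $u$, the down-flips are independent coins on the $k$ one-bits, that $D$ is an increasing function of this down-flip configuration, and that $\Ekeep$ is a \emph{decreasing} event (adding a down-flip only lowers $f(Y')$ by monotonicity). Harris's (FKG) inequality then makes $D$ and $\indicator_{\Ekeep}$ negatively correlated. For the first bound I would write $\Ebad=\biguplus_{i\text{ bad}}\{u=\{i\}\}\cap\Ekeep$; on each piece $U=1$, and Harris gives $\Exp[D\mid u=\{i\},\Ekeep]\le\Exp[D\mid u=\{i\}]=k\tfrac{c}{n}\le c$, so $\Exp[U-D\mid\text{piece}]\ge 1-c$, and averaging over pieces yields $\Exp[U-D\mid\Ebad]\ge 1-c$.

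For the good bound I would partition $\Egood$ by the up-flip set $u$ (every kept update has $u\neq\varnothing$). If $|u|\ge 2$ the piece is automatically good, $U=|u|\ge 2$, and Harris again gives $\Exp[U-D\mid\text{piece}]\ge |u|-c\ge 2-c$, which already exceeds $(1-\tfrac cn)^{\alpha n}(1-(1-\alpha)c)$ since $\alpha c\le 1$. The real work is a single up-flip at a \emph{good} bit $i$: I must show $R_i:=\Exp[U-D\mid u=\{i\},\Ekeep]\ge(1-\tfrac cn)^{\alpha n}(1-(1-\alpha)c)$, after which the whole statement follows because $\Exp[U-D\mid\Egood]$ is a weighted average of the per-piece values.

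For this crux I would classify the $k$ one-bits relative to the value of $i$ into $\mathrm{Low}_i,\mathrm{Eq}_i,\mathrm{High}_i$ of sizes $\ell,s,h$, where the goodness of $i$ forces $\ell<(1-\alpha)n$. Conditioned on $u=\{i\}$, a kept outcome never down-flips a $\mathrm{High}_i$-bit and down-flips at most one $\mathrm{Eq}_i$-bit; if it does, that is the only down-flip, contributing $U-D=0$. Hence the only positive contribution to $U-D$ is $S=\varnothing$, and the negative ones come only from $S\subseteq\mathrm{Low}_i$ with $|S|\ge 2$; bounding these by the full binomial $\Exp[(\Bin(\ell,\tfrac cn)-1)^+]=\ell\tfrac cn-1+(1-\tfrac cn)^{\ell}$ gives numerator $\ge(1-\tfrac cn)^{s+h}(1-\ell\tfrac cn)$. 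Since $\Ekeep\subseteq\{\text{no }\mathrm{High}_i/\mathrm{Eq}_i\text{ flip}\}\uplus\{\text{one }\mathrm{Eq}_i\text{ flip and nothing else}\}$, the denominator is $\Pr[\Ekeep\mid u=\{i\}]\le(1-\tfrac cn)^{s+h}(1+s\tfrac cn(1-\tfrac cn)^{\ell-1})$. Using $s\le n-\ell$ together with Bernoulli's inequality $(1-\tfrac cn)^{-(n-\ell)}\ge 1+(n-\ell)\tfrac cn(1-\tfrac cn)^{\ell-1}$, the ratio collapses to the clean bound $R_i\ge(1-\ell\tfrac cn)(1-\tfrac cn)^{n-\ell}$.

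It then remains to check $(1-\ell\tfrac cn)(1-\tfrac cn)^{n-\ell}\ge(1-\tfrac cn)^{\alpha n}(1-(1-\alpha)c)$ for $0\le\ell<(1-\alpha)n$. Dividing out $(1-\tfrac cn)^{\alpha n}$, this is $\psi(\ell)\ge 1-(1-\alpha)c$ for $\psi(\ell)=(1-\ell\tfrac cn)(1-\tfrac cn)^{(1-\alpha)n-\ell}$; the derivative of $\log\psi$ is decreasing in $\ell$ and nonnegative at $\ell=0$, so $\psi$ is unimodal and its minimum on $[0,(1-\alpha)n]$ is attained at an endpoint, where $\psi((1-\alpha)n)=1-(1-\alpha)c$ exactly and $\psi(0)=(1-\tfrac cn)^{(1-\alpha)n}\ge 1-(1-\alpha)c$ by Bernoulli. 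The second, weaker form $\ge e^{-2\alpha c}(1-(1-\alpha)c)$ follows from $1-\tfrac cn\ge e^{-2c/n}$. The main obstacle is precisely the set $\mathrm{Eq}_i$ of equal-value one-bits: these generate kept but drift-neutral updates that inflate $\Pr[\Egood]$ while contributing nothing to $U-D$, so a naive bound (ignoring them, or replacing $1-\ell\tfrac cn$ by its worst case $1-(1-\alpha)c$) loses a constant factor and fails on benchmarks like $\textsc{OneMax}$; it is exactly the bookkeeping of their count $s$ against $\ell$ via $s\le n-\ell$ that produces the factor $(1-\tfrac cn)^{\alpha n}$.
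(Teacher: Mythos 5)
Your proof is correct, and its skeleton coincides with the paper's: both partition $\Egood$ by the set of up-flipped bits, both dispose of the bad case and of $|u|\ge 2$ by showing that $D$ and the acceptance event are negatively correlated (your Harris/FKG step is the same coupling the paper relegates to its appendix), and both rest on the structural observation that down-flipping a one-bit whose value is at least that of the flipped zero-bit either makes the offspring rejected or forces $U-D=0$. Where you genuinely diverge is the crux estimate for a single up-flip at a good bit $i$. The paper writes $\Exp[U-D\mid\Egood\cap\{\Uset=\{i\}\}]=\Exp[\indicator_{\Egood}(U-D)\mid\Uset=\{i\}]/\Pr[\Egood\mid\Uset=\{i\}]$, lower-bounds the numerator by $(1-\tfrac{c}{n})^{k-k'}(1-\tfrac{ck'}{n})\ge(1-\tfrac{c}{n})^{\alpha n}(1-(1-\alpha)c)\ge 0$, and then simply bounds the denominator by $1$: once the numerator is nonnegative, dividing by a probability can only increase it. You instead bound the acceptance probability from above by $(1-\tfrac{c}{n})^{s+h}\bigl(1+s\tfrac{c}{n}(1-\tfrac{c}{n})^{\ell-1}\bigr)$, cancel the common factor against your numerator bound $(1-\tfrac{c}{n})^{s+h}(1-\tfrac{\ell c}{n})$, absorb the remaining denominator via Bernoulli into $(1-\tfrac{c}{n})^{-(n-\ell)}$, and finally minimize $(1-\tfrac{\ell c}{n})(1-\tfrac{c}{n})^{n-\ell}$ over $\ell\in[0,(1-\alpha)n]$ by log-concavity. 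I checked these steps and they go through. But note that the bookkeeping of the equal-value bits $s$ against $\ell$, which you single out as the essential obstacle, is exactly what the paper's ``denominator at most one'' observation renders unnecessary; your closing claim that a bound ignoring this accounting must fail is therefore not right. What your version buys is an explicit picture of where the acceptance probability sits; what the paper's buys is a one-line finish.
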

\begin{proof}
Let $\mathcal{B}_i$ denote the event that the $i$-th one-bit in $y$ gets flipped in $Y'$. Then, by linearity of expectation, we have
$\mathbb{E}[U-D\mid \Ebad] \geq 1-\sum_{i=1}^k\Pr[\mathcal{B}_i\mid \Ebad].$ By Bayes' Theorem, we have $\Pr[\mathcal{B}_i\mid \Ebad] = \frac{c}{n}\cdot\frac{\Pr[\Ebad\mid \mathcal{B}_i]}{\Pr[\Ebad]}\leq \frac{c}{n},$ where the last step follows by a simple coupling argument, similar as in Section~\ref{sec:number:ones}. Hence $\mathbb{E}[U-D\mid \Ebad] \geq 1-c\frac{k}{n}\geq 1-c,$
as desired.

As for $\Egood$, the second inequality follows from $1-(1-\alpha)c \geq 0$, since $1-x \geq e^{-2x}$ for all $0\leq x \leq 2/3$. 
For the first inequality, let $\Uset$ denote the set of indices of zero-bits in $y$ that get flipped to one-bits in $Y'$. Then
$$\mathbb{E}[U-D\mid\Egood] = \sum_A \Pr[\mathcal{U}=A\mid\Egood] \cdot \mathbb{E}[U-D\mid \Egood\cap\{\mathcal{U}=A\}].$$
Thus, it suffices to estimate $\mathbb{E}[U-D\mid \Egood\cap\{\mathcal{U}=A\}]$ for any set $A\subset[n]$ such that $\Pr[\mathcal{U}=A\vert\Egood]$ is non-zero.

If $\abs{A}\geq 2$, the same argument as for $\Ebad$ gives $$\mathbb{E}[U-D\mid  \Egood\cap\{\mathcal{U}=A\}] \geq 2-c.$$ It remains to consider the case of $\abs{A}=1$, say $A=\{i\}$. In this case, let $k'=\min(k, \lfloor (1-\alpha) n\rfloor)$, and order the one-bits in $y$, $j_1, j_2, \dots j_k$, in descending order with respect to $val_{y+e_i}(j)$ with ties broken arbitrarily. In order for $\mathcal{U}=A$ to be compatible with a good update, we can assume that the values of $j_1, j_2, \dots j_{k-k'}$ must be greater than or equal to the value of $i$.

With these definitions at hand, we write
$$\mathbb{E}[U-D\mid  \Egood\cap\{\Uset=A\}] = \frac{1}{\Pr[\Egood\mid\Uset=A]}\mathbb{E}[\mathbbm{1}_{\Egood}(U-D)\mid\Uset=A].$$
Note that, conditioned on $\Uset=A$, $\mathbbm{1}_{\Egood}(U-D)=0$ whenever one of the bits $j_1, \dots j_{k-k'}$ are flipped. This is because this is either the only bit flipped to a zero, in which case $U-D=1-1=0$, or one additional bit is flipped to a zero, in which case $f(Y')<f(y)$. Whenever the bits $j_1, \dots j_{k-k'}$ remain ones, on the other hand, we can lower bound $\mathbbm{1}_{\Egood}(U-D)$ by one minus the number of bits among $j_{k-k'+1}, \dots j_{k}$ that get flipped. Thus
$$\mathbb{E}[\mathbbm{1}_{\Egood}(U-D)\mid\Uset=A] \geq (1-\frac{c}{n})^{k-k'}(1-\frac{c}{n}k').$$
By assumption, we have $1-\frac{c}{n}k' \geq 1-(1-\alpha)c \geq 0$ which means that
$$\mathbb{E}[U-D\mid \Egood\cap\{\Uset=A\}] \geq \mathbb{E}[\mathbbm{1}_{\Egood}(U-D)\mid\Uset=A] \geq (1-\frac{c}{n})^{\alpha n}(1-(1-\alpha)c),$$
as desired. The proposition follows by observing that $2-c \geq 1-(1-\alpha)c$ as $2-c-1+(1-\alpha)c = 1-\alpha c \geq 1-\frac{\alpha}{1-\alpha} \geq 0,$ where in the second to last step we used $c \leq 1/(1-\alpha)$.

\end{proof}

Note that for any fixed $0< \alpha \leq 1/2$ the considered range in Proposition~\ref{prop:excex} also contains some values $c>1$. Moreover, for the considered range the lower bound for $\mathbb{E}[U-D\mid \Egood] $ is positive. If we could thus show that $\Ebad$ occurs only sufficiently rarely, then we might hope to be able to bound the number of updates. This is what we will do with the entropy compression argument.

\subsection{Entropy of an update step} In this section we study the entropy of a single update starting from a fixed state $y$. We refer the reader who is not familiar with information theory to the introduction in~\cite{cover2012elements}. Naturally, the entropy will depend on $y$. Recall that the random variables $U$ and $D$ denote the number of upflips and downflips, respectively.

 \begin{proposition}\label{prop:entropy:update}
 For any $c < 4/3$, any $0\le k < n$ and any $y\in\{0, 1\}^n$  with exactly $k$ ones we have
$$\mathbb{H}(Y'\mid \Ekeep) \geq \Exp[\log_2\left({n-k\choose U}{k+U \choose D}\right) \mid \Ekeep ],$$
where $\mathbb{H}(Y'\mid \Ekeep)$ denotes the binary entropy of the conditional distribution of $Y'$ given $\Ekeep.$ 
\end{proposition}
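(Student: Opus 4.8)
The plan is to read the entropy as expected surprisal, $\mathbb{H}(Y'\mid\Ekeep)=\Exp\big[-\log_2\Pr[Y'\mid\Ekeep]\,\big|\,\Ekeep\big]$, and reduce the proposition to a \emph{pointwise} estimate. Since $U$, $D$ and the event ``$y'$ is accepted'' are all determined by the outcome $y'$, it suffices to prove that for every $y'$ with $f(y')\ge f(y)$ one has
\[
\Pr[Y'=y'\mid\Ekeep]\ \le\ \binom{n-k}{U}^{-1}\binom{k+U}{D}^{-1}.
\]
Taking $-\log_2$ and then the conditional expectation of $Y'$ given $\Ekeep$ reproduces the claimed inequality, so the whole proposition follows from this one bound.

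Next I would make the left-hand side explicit and lower bound the denominator. A string with $u$ upflips and $d$ downflips has probability $(c/n)^{u+d}(1-c/n)^{n-u-d}$, and since $\Ekeep$ is a function of $y'$ we get $\Pr[Y'=y'\mid\Ekeep]=(c/n)^{U+D}(1-c/n)^{n-U-D}/\Pr[\Ekeep]$. The key structural input is a lower bound on $\Pr[\Ekeep]$ from monotonicity: if no one-bit of $y$ is flipped, then $Y'\ge y$ coordinatewise, so $f(Y')\ge f(y)$ and the step is accepted; hence $\Pr[\Ekeep]\ge(1-c/n)^k$. Substituting, the pointwise bound reduces to the purely numeric inequality
\[
\binom{n-k}{u}\binom{k+u}{d}\Big(\tfrac cn\Big)^{u+d}\Big(1-\tfrac cn\Big)^{n-u-d-k}\ \le\ 1 .
\]
Here I only need $u\ge1$ together with the trivial case $u=d=0$, because monotonicity forces every accepted $y'\neq y$ to have at least one upflip (if $u=0$ and $y'\neq y$ then $y'\lneq y$ and $f(y')<f(y)$).

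To prove the numeric inequality I would factor its left-hand side as
\[
\Pr[\Bin(n-k,c/n)=u]\cdot\binom{k+u}{d}\Big(\tfrac{c}{n-c}\Big)^{d},
\]
and bound the two factors separately. For $u\ge1$ the first factor is maximised (over $u$) near a Poisson mode and stays below $e^{-1}$ whenever $(n-k)c/n\le c<2$; the second factor is at most $\max_{d}\mu^d/d!$ with $\mu=(k+u)\tfrac{c}{n-c}\le \tfrac{nc}{n-c}<2$, which equals $\max(1,\mu)\le \tfrac{nc}{n-c}$. The product is therefore at most $e^{-1}\cdot\tfrac{nc}{n-c}$, comfortably below $1$ for $c<4/3$ and $n$ large; the case $d=0$ is immediate since the second factor is then $1$, and the case $u=d=0$ gives $(1-c/n)^{n-k}\le1$.

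The main obstacle is turning this clean asymptotic (Poisson) estimate into an exact finite-$n$ statement: one must control $\max_{u\ge1}\Pr[\Bin(n-k,c/n)=u]$ and $\binom{k+u}{d}(c/n)^d(1-c/n)^{-d}$ by their Poisson surrogates with rigorous error terms, and check uniformity over all admissible $(u,d)$ and over the split parameter $k$. The crucial point --- and the reason a crude argument fails --- is that $\binom{k+u}{d}$ can exceed $(1-c/n)^{k+u}$ near the downflip mode, so one cannot discard the decaying factor $(1-c/n)^{n-k-u}$; it is exactly the smallness of $\Pr[\Bin(n-k,c/n)=u]$ for $u\ge1$ that compensates, and the hypothesis $c<4/3$ provides ample slack in this trade-off.
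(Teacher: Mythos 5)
Your argument is correct for the proposition as stated, and it takes a genuinely different route from the paper's. The paper works with the averaged quantity: it groups the entropy by flip counts $(u,d)$, reduces the claim to $\Delta=p_{keep}\log_2 p_{keep}+\sum_{u,d}q_{u,d}\log_2\bigl(1/(a_{u,d}b_{u,d})\bigr)\ge 0$, shows by ratio comparisons that $a_{u,d}b_{u,d}\le\max\{a_{1,0}b_{1,0},a_{1,1}b_{1,1}\}$, and then needs a separate convexity step ($(1+x)\log_2(1+x)+x\log_2(3/4)\ge 0$) for the case where the maximum sits at $(1,1)$, because its only lower bound on $p_{keep}$ is $q_{1,0}=a_{1,0}b_{1,0}$. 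You instead prove the stronger \emph{pointwise} bound $\Pr[Y'=y'\mid\Ekeep]\le b_{U,D}^{-1}$, which you can afford because you use the better lower bound $\Pr[\Ekeep]\ge(1-c/n)^k$ (no one-bit flipped, hence accepted by monotonicity). Your two-factor decomposition then handles the whole $(u,d)$-range at once, and the ``main obstacle'' you worry about is not really one: for $u\ge 1$ the binomial pmf is decreasing in $u$ (since $\Pr[u+1]/\Pr[u]\le\lambda/(2(1-c/n))<1$ with $\lambda=(n-k)c/n\le c<4/3$), so the first factor is at most $\lambda(1-c/n)^{n-k-1}\le\lambda e^{-\lambda}e^{c/n}\le e^{-1}e^{c/n}$; the second is at most $\max(1,\mu)$ with $\mu\le nc/(n-c)<2$; and the product is below $\tfrac{4}{3e}+o(1)<1$. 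These are exact finite-$n$ inequalities, and ``$n$ sufficiently large'' enters only in the final numerical comparison, exactly as in the paper's own proof. Your route avoids both the case analysis and the convexity trick, and in fact tolerates a wider range of $c$.

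One caveat. Your reduction leans on the fact that $\Ekeep$, as defined in Section~2, includes the outcome $Y'=y$, which is what makes $\Pr[\Ekeep]\ge(1-c/n)^k$ available. The variant that is actually consumed in Proposition~\ref{entropy:lower} conditions on a genuine update, i.e.\ on $\Ekeep\setminus\{Y'=y\}$ (this is also the event the paper's own $p_{keep}=\sum_{u\ge1}(\cdots)$ computes). For that event the pointwise bound can genuinely fail: take $k=n-1$ and suppose the only accepted offspring besides the all-ones string is a single string with $u=d=1$; then the conditioning event has probability $\tfrac cn(1-\tfrac cn)^{n-2}$ while $a_{1,1}b_{1,1}=\tfrac{c^2}{n}(1-\tfrac cn)^{n-2}$, so the pointwise inequality requires $c\le 1$. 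The averaged inequality still holds in that situation --- this is precisely what the paper's convexity step buys --- so your proof establishes the proposition as written, but would need the paper's averaging argument (or another repair) to cover the update-conditioned variant.
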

\begin{proof} 
Let $\A_{u,d}$ denote the set of all strings $z\in\{0,1\}^n$ such that $f(z)\ge f(y)$ and such that $z$ can be obtained from $y$ by flipping precisely $u$ zeros to ones and $d$ ones to zeros. 
Then
$$
p_{keep}  := \Pr[\Ekeep] = \sum_{u=1}^{n-k}\sum_{d= 0}^k \sum_{z\in \A_{u,d}} (\frac{c}n)^{u+d}(1-\frac{c}n)^{n-u-d}.
$$
To simplify notation we write
$$q_{u,d}:=\abs{\mathcal{A}_{ud}}\left(\frac{c}{n}\right)^{u+d}\left(1-\frac{c}{n}\right)^{n-u-d}$$
and 
$$a_{u,d}:= \left(\frac{c}{n}\right)^{u+d}\left(1-\frac{c}{n}\right)^{n-u-d}.$$
By the definition of entropy we have
\begin{align*}
\mathbb{H}(Y' \mid  \Ekeep )&=-\frac{1}{p_{keep}}\sum_{u=1}^{n-k}\sum_{d= 0}^k  \sum_{z\in \A_{u,d}} a_{ud} \log_2(\frac{a_{u,d}}{p_{keep}})\\
&=\frac{1}{p_{keep}}\sum_{u=1}^{n-k}\sum_{d= 0}^k  q_{ud} \log_2(\frac{p_{keep}}{a_{u,d}}).
\end{align*}
We want to show that this is at least as large as the expectation in the statement of the proposition. With 
$$b_{u,d}:={n-k \choose u}{k+u \choose d}$$
we can write this expectation as
$$\Exp[\log_2\left({n-k\choose U}{k+U \choose D}\right) \mid \Ekeep ] 
=\frac{1}{p_{keep}}\sum_{u=1}^{n-k}\sum_{d=0}^k q_{u,d}\log_2\left(b_{u,d}\right).$$
Hence the proposition follows if we can show that the difference is non-negative. That is, we have to show that
\begin{equation*}
\frac1{p_{keep}}\sum_{u=1}^{n-k}\sum_{d=0}^k q_{u,d} \log_2\left(\frac{p_{keep}}{a_{u,d}b_{u,d}}\right) \stackrel!\ge 0.
\end{equation*}
Multiplying by $p_{keep} \ge 0$ and partitioning the log amounts to showing that
\begin{equation}\label{eq:Hdiff}
\Delta := p_{keep}\log_2 p_{keep}+\sum_{u=1}^{n-k}\sum_{d=0}^k q_{ud}\log_2\left(\frac{1}{a_{u,d}b_{u,d}}\right) \stackrel!\ge 0.
\end{equation}
To this end, an elementary calculation shows that the product $a_{u,d}b_{u,d}$ is either maximized by the case $u=1$, $d=0$, or by $u=d=1$. We defer the calculation to the appendix.

Assume first that $a_{1,1}b_{1,1} \leq a_{1,0}b_{1,0}$. Then the left hand side in \eqref{eq:Hdiff} satisfies
$$
\Delta \ge p_{keep}\log_2 p_{keep}+p_{keep}\log_2(\frac{1}{a_{1,0}b_{1,0}}) 
$$
which is non-negative, as $p_{keep} \ge q_{1,0}=a_{1,0}b_{1,0}$.
For the other case, assume $a_{1,1}b_{1,1} > a_{1,0}b_{1,0}$. Writing $p_{keep} = (1+x)q_{1,0}$ for some $x\ge 0$ 
and recalling that $q_{1,0} = a_{1,0}b_{1,0}$ we obtain
\begin{align*}
\Delta &\ge (1+x)q_{1,0}\cdot(\log_2((1+x) q_{1,0}) +q_{1,0}\log_2(\frac{1}{q_{1,0}}) + xq_{1,0}\log_2(\frac{1}{a_{1,1}b_{1,1}})\\
&=q_{1,0}\cdot\left( (1+x)\log_2(1+x) + x \log_2\frac{a_{1,0}b_{1,0}}{a_{1,1}b_{1,1}}\right).
\end{align*}
Now observe that $\frac{a_{1,0}b_{1,0}}{a_{1,1}b_{1,1}} = \frac{1-c/n}c \frac{n}{k+1} > 3/4$, for $n$ sufficiently large. The claim now follows from $(1+x)\log_2(1+x) +x \log_2(3/4) \ge 0$ for all $x\ge 0$.
\end{proof}

\section{Entropy of the Markov chain}
The aim of this section is to provide bounds on the entropy of the sequence of updates in the $(1+1)$-EA. Given the sequence $(X_t)_{t=0}^\infty$ of search points, as generated by the algorithm when started in some fixed state $X_0=x\in\{0, 1\}^n$, we define $T$ as the number of updates, that is, the number of times $t=0, 1, \dots$ such that $X_t\neq X_{t+1}$. For each $t=0, 1, \dots T$, we denote by $Y_t$ the state of the algorithm after $t$ update steps. To simplify notation later on, we want $Y_t$ be defined for all $t\geq 0$, so we define $Y_{t}$ to be the all-one string if $t > T$. We note that as $(X_t)_{t=0}^\infty$ is a Markov chain, so is $(Y_t)_{t=0}^\infty$. More precisely, the transition probabilities of  $(Y_t)_{t=0}^\infty$ are the ones obtained from transition probabilities of $(X_t)_{t=0}^\infty$ by removing self-transitions from all states besides the all ones state. 

We will use $\noz_t$, $\noo_t$ to denote the number of zero-bits and one-bits in $Y_t$, respectively. Furthermore, we use $U_t$ and $D_t$ to denote the number of upflips (zero-to-one) and downflips (one-to-zero) from $Y_t$ and $Y_{t+1}$, respectively.

Recall that the entropy of (the trajectory of) the Markov chain $(Y_t)_{t=0}^{\infty}$, as described above, can be written as
$$\mathbb{H}((Y_t)_{t=0}^\infty) = 
\sum_{t=0}^\infty \mathbb{H}(Y_{t+1} \mid Y_t),$$
where
 $$
 \mathbb{H}(Y_{t+1} \mid Y_t) =-\sum_{y}\Pr[Y_t=y] \mathbb{H}(Y_{t+1} \mid Y_t=y).$$
This entropy is finite almost surely, since the Markov chain will converge to the absorbing state almost surely. The next two propositions bound this entropy from below and  above.
 \begin{proposition}\label{entropy:lower}
$$\mathbb{H}((Y_t)_{t=0}^{\infty}) \ge \Exp\left[\sum_{t=0}^{T-1}\log_2\left({\noz_t\choose U_{t}}{\noo_t+U_t \choose D_{t}}\right)\right].
$$
\end{proposition}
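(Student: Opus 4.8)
The plan is to pass from the entropy of the whole trajectory $(Y_t)_{t=0}^\infty$ to a sum of per-step conditional entropies, and then to apply Proposition~\ref{prop:entropy:update} step by step. The excerpt already records the chain rule decomposition
\[
\mathbb{H}((Y_t)_{t=0}^\infty) = \sum_{t=0}^\infty \mathbb{H}(Y_{t+1}\mid Y_t),
\]
so I would start from this identity and note that every term is nonnegative (entropy of a conditional distribution is never negative). Hence I am free to discard all terms with $t\geq T$ and only keep the contributions from genuine update steps $t=0,\dots,T-1$; this already yields a lower bound of the form $\sum_{t=0}^{T-1}\mathbb{H}(Y_{t+1}\mid Y_t)$, where $T$ is itself random.

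The heart of the argument is to lower bound each surviving term $\mathbb{H}(Y_{t+1}\mid Y_t=y)$. First I would unfold the definition
\[
\mathbb{H}(Y_{t+1}\mid Y_t) = \sum_y \Pr[Y_t=y]\,\mathbb{H}(Y_{t+1}\mid Y_t=y),
\]
so that it suffices to control the inner quantity for each fixed reachable state $y$. Here I would invoke the crucial fact that $(Y_t)$ is the \emph{jump chain} of $(X_t)$: conditioned on $Y_t=y$ (for $y$ not the all-ones string), the distribution of $Y_{t+1}$ is exactly the distribution of the offspring $Y'$ obtained from $y$ by flipping each bit with probability $c/n$, \emph{conditioned on} the keep event $\Ekeep$ (self-transitions having been removed). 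Thus $\mathbb{H}(Y_{t+1}\mid Y_t=y) = \mathbb{H}(Y'\mid \Ekeep)$ in the notation of Section~2, with $k$ equal to the number of ones in $y$. Applying Proposition~\ref{prop:entropy:update} gives
\[
\mathbb{H}(Y_{t+1}\mid Y_t=y) \geq \Exp\!\left[\log_2\!\left({\noz_t\choose U_t}{\noo_t+U_t\choose D_t}\right)\,\Big|\,Y_t=y\right],
\]
where $\noz_t=n-k$, $\noo_t=k$, and $U_t,D_t$ are the up- and downflips from $Y_t$ to $Y_{t+1}$. Averaging over $y$ against $\Pr[Y_t=y]$ removes the conditioning and turns this into $\mathbb{H}(Y_{t+1}\mid Y_t)\geq \Exp[\log_2(\cdots)\cdot \indicator_{t<T}]$, the indicator arising because the bound only applies on non-absorbing states.

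To finish I would sum over $t$ and interchange the (countable) sum with the expectation—justified by nonnegativity of the summands via the monotone convergence / Tonelli theorem—to obtain
\[
\mathbb{H}((Y_t)_{t=0}^\infty) \geq \sum_{t=0}^\infty \Exp\!\left[\indicator_{t<T}\log_2\!\left({\noz_t\choose U_t}{\noo_t+U_t\choose D_t}\right)\right] = \Exp\!\left[\sum_{t=0}^{T-1}\log_2\!\left({\noz_t\choose U_t}{\noo_t+U_t\choose D_t}\right)\right],
\]
which is the claim. The main obstacle I anticipate is bookkeeping around the random stopping time $T$ and the artificial extension $Y_t=\vtr{1}$ for $t>T$: I must make sure Proposition~\ref{prop:entropy:update} is only applied at states with $k<n$ (where $\Ekeep$ and the binomial coefficients are meaningful), and that discarding the tail terms is legitimate. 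Both issues are handled cleanly by nonnegativity of conditional entropy together with Tonelli, so no delicate estimate is required; the content is entirely in correctly identifying the jump-chain transition with the conditioned single-step distribution $Y'\mid\Ekeep$.
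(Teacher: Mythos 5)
Your proposal is correct and follows essentially the same route as the paper: decompose the trajectory entropy into per-step conditional entropies via the Markov/chain-rule identity, identify the one-step distribution of the jump chain at a non-absorbing state $y$ with $Y'$ conditioned on $\Ekeep$, apply Proposition~\ref{prop:entropy:update} statewise, and sum using nonnegativity and the vanishing of the contribution at the absorbing state. Your treatment of the random stopping time $T$ via indicators and Tonelli is, if anything, slightly more explicit than the paper's.
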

 \begin{proof}
Observe that, for any $y$, the term  $\mathbb{H}(Y_{t+1} \mid Y_t=y)$ does not depend on $t$, as $(Y_t)_{t=0}^{\infty}$ is a time-homogenous Markov chain. If we thus let $g(y):=\mathbb{H}(Y_{t+1}\mid  Y_t=y)$,  we get 
$$\mathbb{H}(Y_{t+1} \mid Y_t) =\Exp[g(Y_t)]$$
and thus
$$\mathbb{H}((Y_t)_{t=0}^\infty) \geq \sum_{t=0}^\infty \mathbb{E}\left[g(Y_t)\right] = \mathbb{E}\left[\sum_{t=0}^\infty g(Y_t)\right] = \mathbb{E}\left[\sum_{t=0}^{T-1} g(Y_t)\right],$$
where in the last step we have used that the conditional entropy of an update is zero once we have reached the all-ones state. 
As Proposition~\ref{prop:entropy:update} implies that 
$\Exp[g(Y_t)] \ge \Exp[\log_2\left({\noz_t\choose U_{t}}{\noo_t+U_t \choose D_{t}}\right)]$, the proposition follows.
\end{proof}

\begin{proposition}\label{entropy:upper}
There exists a constant $C$ such that for all $0<\alpha<1$ and for all $0<c < 2$, 
$$\mathbb{H}((Y_t)_{t=0}^{\infty}) \le C\Exp[T] -\log_2(1/\alpha) \Exp[T_{bad}] + \Exp\left[\sum_{t=0}^{T-1}\log_2\left({\noz_{t+1}\choose D_{t}}{\noo_{t+1} +D_t\choose U_{t}}\right) \right]\!,$$
where $T_{bad}$ denotes the number of bad updates  (as defined in Section~\ref{sec:number:ones}). 
\end{proposition}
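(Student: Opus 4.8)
The plan is to read the claimed right-hand side as the expected length of an explicit, uniquely decodable encoding of the trajectory $(Y_t)_{t=0}^{\infty}$ (equivalently of $(Y_0,\dots,Y_T)$, since the chain is constant after $T$), and to invoke the source coding inequality $\mathbb{H}((Y_t)_{t=0}^{\infty})\le\Exp[\mathrm{len}]$, valid for any uniquely decodable code. The encoding is done \emph{backwards}: we first write a self-delimiting description of $T$ (absorbed into $C\Exp[T]$), and then, knowing that $Y_T$ is the all-ones string, we append for $t=T-1,T-2,\dots,0$ a block that lets the decoder reconstruct $Y_t$ from the already-reconstructed $Y_{t+1}$. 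The decoder is assumed to know $f$ and the fixed parameters $\alpha,c$.

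For a single reverse step the block has three parts. First, a self-delimiting encoding of the integers $U_t,D_t$ together with one marker bit recording whether the update is good or bad; this costs $O(1)+O(\log U_t)+O(\log(D_t+1))$ bits. Second, the decoder restores the down-flipped bits: these form a set $Q$ of size $D_t$ among the $\noz_{t+1}$ zeros of $Y_{t+1}$, so specifying them costs $\lceil\log_2\binom{\noz_{t+1}}{D_t}\rceil$ bits and produces the intermediate state $z:=Y_{t+1}+\sum_{j\in Q}e_j$, which has exactly $\noo_{t+1}+D_t=\noo_t+U_t$ ones. Third, on a \emph{good} step the decoder identifies the $U_t$ up-flipped bits among the ones of $z$, costing $\lceil\log_2\binom{\noo_{t+1}+D_t}{U_t}\rceil$ bits, after which flipping them down recovers $Y_t$. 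Summing the two binomial terms over $t<T$ yields exactly the expectation in the statement; every ceiling adds at most one bit, and the integer-encoding overhead is controlled by \eqref{eq:UplusD}, which gives $\Exp[U_t+D_t\mid\Ekeep]\le(1+c)e^c=O(1)$ uniformly for $c<2$. By concavity of $\log$ and the identity $\sum_{t\ge0}\Pr[T>t]=\Exp[T]$, all this overhead sums to $O(\Exp[T])$.

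The savings come from the third part on \emph{bad} steps, where by definition $U_t=1$ and $z=Y_t+e_i$ for the unique up-flipped bit $i$. The decoder can recompute the values $val_z(\cdot)$ of the one-bits of $z$ from $f$ once it holds $z$; we order them decreasingly (ties broken by index) and encode $i$ by its rank $r$ in this order. Badness guarantees that at least $(1-\alpha)n$ one-bits of $z$ have value strictly below $val_z(i)$, so the number of candidates with value $\ge val_z(i)$, and hence $r$, is at most $(\noo_t+1)-(1-\alpha)n\le\alpha(\noo_t+1)$, the last inequality being $(1-\alpha)(\noo_t+1-n)\le0$ since $\noo_t+1\le n$. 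Thus $r$ is written in $\lceil\log_2(\alpha(\noo_t+1))\rceil$ bits rather than the $\log_2(\noo_t+1)=\log_2\binom{\noo_{t+1}+D_t}{1}$ bits charged in the good-step term, a saving of $\log_2(1/\alpha)-O(1)$ per bad step. Collecting the $O(1)$ slacks into $C\Exp[T]$ produces the term $-\log_2(1/\alpha)\Exp[T_{bad}]$.

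The main obstacle is the bad-step saving, which must be both correct and worth exactly $\log_2(1/\alpha)$. Correctness forces the decoding order: the down-flips must be restored \emph{before} locating $i$, because the value ordering lives on $z$ and not on $Y_{t+1}$, and it relies on the decoder sharing $f$ and the tie-breaking rule. The quantitative saving rests entirely on the inequality $(\noo_t+1)-(1-\alpha)n\le\alpha(\noo_t+1)$, i.e. on the precise form of badness, together with the observation that a bad step forces $\alpha(\noo_t+1)\ge1$, so the rank is genuinely encodable. The remainder is bookkeeping: one checks that the constants hidden in the $O(1)$ terms (integer codes, ceilings, the description of $T$) are uniform in $\alpha$ and in $c<2$, so that a single $C$ works throughout.
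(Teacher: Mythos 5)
Your proposal is correct and follows essentially the same route as the paper: a backward (all-ones to $Y_0$) uniquely decodable encoding whose expected length upper-bounds the entropy, with the downflips specified among the zeros of $Y_{t+1}$, the upflips among the ones of the intermediate state, and a $\log_2(1/\alpha)$ saving on bad updates coming from the at most $\alpha(\noo_{t+1}+D_t)$ candidate positions, the per-step overhead being controlled via \eqref{eq:UplusD}. Your explicit rank-by-value encoding of the single upflipped bit on bad steps is a slightly more careful rendering of the decodability point that the paper leaves implicit, but it is not a different argument.
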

\begin{proof}
Recall that the entropy represents a lower bound on the expected number of bits needed to represent  all information of the process. Thus, the expected length of any encoding strategy for the traces of the chain will form an upper bound on the entropy. We proceed as follows. We encode the process \emph{backwards}, i.e.\ we start from the all-ones vector that is the unique absorbing state of the process. For each update we encode
\begin{list}{}{\topsep0pt\itemsep0pt\leftmargin0.8cm\labelsep0.2cm\labelwidth0.6cm}
\item[$(i)$\hfill]whether the update is good or bad,
\item[$(ii)$\hfill]the number of downflips $D_t$ and the number of upflips $U_t$,
\item[$(iii)$\hfill]the actual choice of which $D_t$ zero-bits in $Y_{t+1}$ were the bits that were flipped from one to zero in the update from $Y_t$ to $Y_{t+1}$, and
\item[$(iv)$\hfill]the actual choice of which $U_t$ one-bits in $Y_{t+1}$ were the bits that were flipped from zero to one.
\end{list} 
To mark the global end of our encoding, we are somewhat wasteful: we start the encoding of each update with a one-bit and conclude the whole encoding with a single zero-bit.
For each update  we  encode $(i)$ with a single bit, and $D_t$ and $U_t$ with $D_t+U_t+2$ bits using a unary encoding. 
From~\eqref{eq:UplusD} we deduce  that there exists a constant $C > 0$ such the expected length of the encoding of $(i)$ and $(ii)$ for all updates is bounded by $C\Exp[T]$. For the encoding of $(iii)$ observe that the $D_t$ bits that correspond to downflips have to be chosen from the zero-bits in $Y_{t+1}$. We thus can encode the actual choice by $\lceil\log_2{\noz_{t+1}\choose D_t} \rceil$ bits. For the encoding in $(iv)$
we distinguish between good and bad updates. For a good update we proceed similarly as in $(iii)$. As the $D_t$ bits that correspond to upflips have to be chosen from one-bits in $Y_{t+1}$, we can encode the actual choice by $\lceil\log_2{\noo_{t+1}\choose U_t} \rceil$ bits. For a bad update we can be more efficient in $(iv)$. Observe first that a bad update implies $U_t=1$. We thus need to specify only a single bit. Recall also that the definition of bad updates implies that 
there exist at least $(1-\alpha)n$ one-bits in $Y_t$ that have lower value than the bit that we want to flip. As the number of one-bits in $Y_t$ is bounded by $\noo_{t+1}+D_t$ we thus see that we have 
at most $\noo_{t+1}+D_t - (1-\alpha)n \le \alpha(\noo_{t+1}+D_t )$ bits from which we can choose the bit that corresponds to the (single) upflip. We can thus encode the choice of this bit with
$\lceil\log_2(\alpha( \noo_{t+1}+D_t)) \rceil $ bits, which is less than $
\lceil\log_2{\noo_{t+1}+D_t\choose U_t} \rceil -\log_2(1/\alpha)+1$ bits.
The claimed bound in the proposition follows by collecting all terms.
\end{proof}

\section{Proof of the theorem}

We first obtain a bound on the expected number of bad updates by comparing upper and lower bound on the entropy of the Markov chain.
\begin{proposition}\label{bound:bad}
If the algorithm starts in a state with exactly $k$ ones, then
$$\Exp[T_{bad}] \le {\textstyle\frac{C}{\log_2(1/\alpha)}}\Exp[T] + {\textstyle\frac{1}{\log_2(1/\alpha)}} {\log_2{n\choose k}},$$
where $C$ is the constant from Proposition~\ref{entropy:upper}.
\end{proposition}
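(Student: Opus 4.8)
The plan is to sandwich the trajectory entropy $\mathbb{H}((Y_t)_{t=0}^{\infty})$ between the lower bound of Proposition~\ref{entropy:lower} and the upper bound of Proposition~\ref{entropy:upper}, and then to solve for $\Exp[T_{bad}]$. Both bounds feature an inner sum $\sum_{t=0}^{T-1}$ over the same updates, so chaining them and rearranging (moving the two inner sums to one side) yields
\begin{equation*}
\log_2(1/\alpha)\,\Exp[T_{bad}] \le C\,\Exp[T] + \Exp\left[\sum_{t=0}^{T-1}\log_2\left(\frac{\binom{\noz_{t+1}}{D_t}\binom{\noo_{t+1}+D_t}{U_t}}{\binom{\noz_t}{U_t}\binom{\noo_t+U_t}{D_t}}\right)\right].
\end{equation*}
If $\Exp[T]=\infty$ the assertion is vacuous, so I may assume all expectations involved are finite, which also legitimises cancelling the entropy. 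The whole proposition therefore reduces to showing that the remaining expectation equals $\log_2\binom{n}{k}$.

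This is where the main work lies, and the key observation is that the summand telescopes. A single update changes the counts by $\noz_{t+1}=\noz_t-U_t+D_t$ and $\noo_{t+1}=\noo_t+U_t-D_t$, so in particular $\noo_{t+1}+D_t=\noo_t+U_t$. Expanding the four binomials into factorials and cancelling (the common factor $(\noo_t+U_t)!$ disappears, as do the $U_t!$, $D_t!$ and $(\noz_t-U_t)!$ terms), a short computation gives the clean identity
\begin{equation*}
\frac{\binom{\noz_{t+1}}{D_t}\binom{\noo_{t+1}+D_t}{U_t}}{\binom{\noz_t}{U_t}\binom{\noo_t+U_t}{D_t}} = \frac{\noz_{t+1}!\,\noo_{t+1}!}{\noz_t!\,\noo_t!}.
\end{equation*}
Hence, with the potential $\Phi_t:=\log_2\!\left(\noz_t!\,\noo_t!\right)$, each summand equals the increment $\Phi_{t+1}-\Phi_t$, and along every trajectory the inner sum collapses to $\Phi_T-\Phi_0$.

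It remains to read off the two endpoints, both deterministic. At the end the chain sits in the absorbing all-ones state, so $\noz_T=0$ and $\noo_T=n$, giving $\Phi_T=\log_2(n!)$; at the start the state is fixed with exactly $k$ ones, so $\noz_0=n-k$ and $\noo_0=k$, giving $\Phi_0=\log_2\!\left((n-k)!\,k!\right)$. Therefore the telescoped sum equals $\log_2(n!)-\log_2\!\left((n-k)!\,k!\right)=\log_2\binom{n}{k}$ along every trajectory, so its expectation is the same constant. Substituting this into the displayed inequality and dividing by $\log_2(1/\alpha)$ gives precisely the claimed bound. I expect the only genuine obstacle to be the factorial bookkeeping in the telescoping identity; the essential point is that pairing the forward and backward binomials across the \emph{same} transition forces the cross terms to cancel, which is exactly why the sum is a perfect telescope and the per-step dependence on the (complicated) trajectory vanishes.
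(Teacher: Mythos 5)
Your proposal is correct and follows essentially the same route as the paper: chain Propositions~\ref{entropy:lower} and~\ref{entropy:upper}, observe via $\noz_{t+1}=\noz_t-U_t+D_t$ and $\noo_{t+1}+D_t=\noo_t+U_t$ that the per-step binomial ratio equals $\noz_{t+1}!\,\noo_{t+1}!/(\noz_t!\,\noo_t!)$, telescope, and evaluate the endpoints to get $\log_2\binom{n}{k}$. The only (harmless) cosmetic differences are that you phrase the telescoping via a potential $\Phi_t$ and explicitly dispose of the case $\Exp[T]=\infty$, which the paper leaves implicit.
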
 
\begin{proof}
Collecting and rearranging the terms from Propositions~\ref{entropy:lower} and~\ref{entropy:upper} we get
\begin{align}\label{eq:telescoping}
\Exp[T_{bad}] \le {\textstyle\frac{C}{\log_2(1/\alpha)}}\Exp[T] + {\textstyle\frac{1}{\log_2(1/\alpha)}} \Exp\left[\log_2\left(\prod_{t=0}^{T-1}
\frac{{\noz_{t+1}\choose D_{t}}{\noo_{t+1}+D_t \choose U_{t}}}{{\noz_t\choose U_{t}}{\noo_t+U_t \choose D_{t}}}\right)\right].
\end{align}
Using the formulas $\noo_{t+1}=\noo_t+U_t-D_t$ and $\noz_{t+1}=\noz_t-U_t+D_t$, it is easy to see that, for any $0\leq t\leq T-1$,
\begin{align*}
\frac{{\noz_{t+1}\choose D_{t}}{\noo_{t+1}+D_t \choose U_{t}}}{{\noz_t\choose U_{t}}{\noo_t+U_t \choose D_{t}}}&=\frac{\noz_{t+1}!}{\noz_t!}\cdot\frac{\noo_{t+1}!}{\noo_t!}.
\end{align*}
Hence, the product in~\eqref{eq:telescoping} is telescoping, and we  get
$$
\Exp[T_{bad}] \le {\textstyle\frac{C}{\log_2(1/\alpha)}}\Exp[T] + {\textstyle\frac{1}{\log_2(1/\alpha)}} \Exp\left[\log_2\left(
\frac{\noz_{T}! \cdot \noo_T!}{\noz_0!\cdot\noo_0!}\right)\right].$$
The claim now follows  from
$
\frac{\noz_{T}! \cdot \noo_T!}{\noz_0!\cdot\noo_0!} = {{n\choose \noo_0}}/{{n\choose \noo_T}} = {n\choose \noo_0}
$, as $\noo_0=k$ and $\noo_T=n$.
\end{proof}

From this and Proposition~\ref{prop:excex} we obtain an upper bound on the number of updates. 
\begin{proposition}\label{bound:total}
There exists an $\eps >0 $ and $\beta >0$ such that for any $0<c\le 1+\eps$ the following holds.
If the algorithm starts in a state with exactly $k$ ones, then
$$\Exp[T] \le \beta (n-k) + \beta {\log_2{n\choose k}}.$$
\end{proposition}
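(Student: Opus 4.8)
The plan is to combine a drift argument for the number of ones with the bad-update bound of Proposition~\ref{bound:bad}. Since $(Y_t)$ starts with $k$ ones and is absorbed at the all-ones string, the upflips must create exactly the $n-k$ missing ones, so that deterministically
$$\sum_{t=0}^{T-1}(U_t-D_t)=n-k.$$
I would split this sum according to whether each update is good or bad, and estimate the expected contribution of each part using Proposition~\ref{prop:excex}. Writing $G_t$ and $B_t$ for the indicators that the $t$-th update is good, respectively bad, and setting $\delta:=e^{-2\alpha c}(1-(1-\alpha)c)$ for the positive lower bound on $\Exp[U-D\mid\Egood]$, the goal of the drift step is the inequality
$$\delta\,\Exp[T]\ \le\ (n-k)+(\delta+c-1)\,\Exp[T_{bad}].$$

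To obtain this, I would check that $S_j:=\sum_{t<j}G_t(U_t-D_t)-\delta\sum_{t<j}G_t$ is a submartingale with respect to the natural filtration of $(Y_t)$: conditioned on $Y_j=y\neq\mathbf 1$ and on the update being good, Proposition~\ref{prop:excex} gives $\Exp[U_j-D_j\mid Y_j=y,\Egood]\ge\delta$ uniformly in $y$, which makes the conditional increment nonnegative. The analogous statement with the constant $1-c$ in place of $\delta$ (using the $\Ebad$ bound of Proposition~\ref{prop:excex}) handles the bad updates. Optional stopping then yields $n-k\ge\delta\Exp[T_{good}]+(1-c)\Exp[T_{bad}]$, and substituting $T_{good}=T-T_{bad}$ gives the displayed inequality. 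Finally I would insert Proposition~\ref{bound:bad}; since $\delta+c-1>0$, substituting its upper bound on $\Exp[T_{bad}]$ preserves the inequality and, after collecting terms, produces
$$\Big(\delta-\tfrac{(\delta+c-1)C}{\log_2(1/\alpha)}\Big)\Exp[T]\ \le\ (n-k)+\tfrac{\delta+c-1}{\log_2(1/\alpha)}\log_2\binom{n}{k}.$$
Dividing by the bracketed coefficient, provided it is a positive constant, gives the claim with a suitable $\beta$.

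The remaining work is to choose the parameters so that this coefficient is indeed positive. I would fix $\alpha\in(0,\tfrac12]$ small enough that $\log_2(1/\alpha)>2C$; at $c=1$ one has $\delta+c-1=\delta=e^{-2\alpha}\alpha>0$, so the coefficient is at least $\delta(1-C/\log_2(1/\alpha))>\delta/2>0$. Then I would pick $\eps>0$ small (in particular with $1+\eps\le1/(1-\alpha)$, as required by Proposition~\ref{prop:excex}, and small enough to keep $\delta>0$) so that by continuity in $c$ the coefficient stays bounded away from $0$ for all $0<c\le1+\eps$.

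The main obstacle is the interplay of these parameters together with the a priori possibility that $\Exp[T]=\infty$. Because the bad-update drift $1-c$ is negative for $c>1$, the argument only closes after the number of bad updates has been controlled, and the final division by the bracketed coefficient is legitimate only once $\Exp[T]$ is known to be finite. I would therefore run the drift step at the bounded stopping time $T\wedge N$ (where optional stopping applies since the increments are bounded by $2n$), derive the displayed bound with $\Exp[T\wedge N]$ in place of $\Exp[T]$, and let $N\to\infty$ by monotone convergence; the truncated form of Proposition~\ref{bound:bad} needed here follows from the same entropy comparison. The genuinely delicate point is that a positive coefficient for some $c>1$ exists at all, which works only because $\log_2(1/\alpha)\to\infty$ as $\alpha\to0$ while $\delta+c-1$ remains $O(\alpha)$ near $c=1$.
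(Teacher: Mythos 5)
Your proposal is correct and follows essentially the same route as the paper's proof: the identity $\sum_t(U_t-D_t)=n-k$, the good/bad split via Proposition~\ref{prop:excex}, substitution of Proposition~\ref{bound:bad}, and the choice of $\alpha$ with $\log_2(1/\alpha)>2C$ so that the coefficient of $\Exp[T]$ stays positive near $c=1$ (the paper takes $\alpha=2^{-2C}$ and bounds the coefficient by $\tfrac14 D_{\min}$ using $\eps<D_{\min}/2$). Your submartingale/optional-stopping packaging and the truncation at $T\wedge N$ are a slightly more careful rendering of the paper's direct summation of expectations, but not a different argument.
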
 
\begin{proof}
Observe that $\sum_{t=0}^\infty(\noo_{t+1}-\noo_{t}) = n-\noo_0$. If we thus start in a state with exactly $k$ ones, then whenever $0 \leq \alpha\leq 1/2$ and $0 < c \leq 1/(1-\alpha)$ we get from linearity of expectation and  Proposition \ref{prop:excex} that 
\begin{align}\label{eq:excex}
n -k &= \sum_{t=0}^\infty\mathbb{E}[\noo_{t+1}-\noo_{t}] \\ 
&= \sum_{t=0}^\infty\left(\mathbb{E}[\noo_{t+1}-\noo_{t} | {\Egood(t)}] \cdot \Pr[{\Egood(t)}] +\right.\nonumber\\
&\hspace*{2.5cm}\left.\mathbb{E}[\noo_{t+1}-\noo_{t} | {\Ebad(t)}] \cdot \Pr[{\Ebad(t)}] \right)\nonumber\\
&\stackrel{P\ref{prop:excex} }\geq e^{-2\alpha c} (1-(1-\alpha)c)\cdot \Exp[T-T_{bad}] + (1-c)\cdot \Exp[T_{bad}],\nonumber
\end{align} 
where we have used $\mathcal{E}_{good/bad}(t)$ to denote the event that $Y_t$ is not the all-ones string and the update from $Y_t$ to $Y_{t+1}$ is bad or good, respectively. (Note that once  the Markov chains has reached the all-ones state,   neither $\Ebad(t)$ nor $\Egood(t)$ can occur and the contribution of the corresponding term in the last sum is  zero, as is desired.)

For ease of notation let $D := e^{-2\alpha c} (1-(1-\alpha)c)$. Then the above can be rewritten as
 $$n-k\ge D\Exp[T] - (D-1+c) \Exp[T_{bad}].$$ 
Let $C$ be the constant from Proposition~\ref{bound:bad}. We may assume $C>1/2$. For a fixed $0<\alpha <1$ (to be determined later) we will choose an $\eps >0$ such that $1+\eps < 1/(1-\alpha)$. Then~\eqref{eq:excex} holds for all $c \in [0,1+\eps]$, and we have $D>0$ for any such $c$. Moreover, since $D$ is a continuous function, it attains a minimum $D_{\min} = D_{\min}(\alpha)>0$ on the compact interval $c \in [0,1+\eps]$. We may assume that $\eps \leq 1/3$ and $\eps < D_{\min}/2$, and by Proposition~\ref{bound:bad}, for all $0<c \leq 1+\eps$,
$$
n-k \ge 
\left(D - \frac{C}{\log_2(1/\alpha)}(D-1+c) \right) \cdot \Exp[T] - \frac{D-1+c}{\log_2(1/\alpha)} \log_2{n\choose k}.
$$
Set now $\alpha=2^{-2C} <1/2$ and observe that then the term in front of $\Exp[T]$ is equal to
$\frac12(D+1-c) \geq \frac14 D_{\min}$; the claim of the proposition follows.\end{proof}

\begin{proof}[Proof of Theorem~\ref{thm:main}]
As $\log_2{n\choose k}\le n$, the claim on the number of search points follows immediately from Proposition~\ref{bound:total}. For the bound on the number of steps we have to be more careful, as we also have to count the number of
steps \emph{between} updates. To do so the following observation is useful. Suppose we are in a state with exactly $k$ ones. Then the probability that we flip exactly one zero-bit in the next step is $(n-k)\frac{c}n(1-\frac{c}n)^{n-k} \geq \tfrac12 ce^{-c}\frac{n-k}n$, which holds for $n$ sufficiently large. Note that we may assume $c < 4/3$, in which case we obtain a probability of at least $\tfrac1{10} c\frac{n-k}n$. As we will accept any of these moves, we thus see that the expected number of steps until the next update is at most $\tfrac{10}{c}\cdot n/(n-k)$. 
 This together with  Proposition~\ref{bound:total} implies a bound of $O(n^2/c)$ on the running time of the \ooea. To get a quasilinear bound we partition  the trace of the algorithm in phases. 
For this, let $S_k \subseteq\{0,1\}^n$ denote the set of strings with at most~$2^k-1$ zeros, where $0\le k \le \lfloor\log_2 n\rfloor-1 =: k_0$. 

Before reaching a state from $S_{k_0}$ the expected time between two updates is just $O(1/c)$ (as then we still have a constant fraction of zeros to choose from). From Proposition~\ref{bound:total} we thus know that the Markov chain $(X_t)_{t=0}^\infty$ will reach a state from $S_{k_0}$ in $O(n/c)$ steps. 
Next we consider the phases in which we start in a state from $S_{k}$ and terminate (the phase) when we reach a state from $S_{k-1}$ (for the first time). Denote by $T_k$ the number of update steps in this phase. We can use 
the bound from Proposition~\ref{bound:total}  (that considers the run of the Markov chain until it reaches the all-ones string) to obtain 
$$
\Exp[T_k] \le \beta (n-(n-2^{k}+1)) + \beta \log_2 {n\choose n-2^{k}+1} \le 2 \beta 2^{k}\log n. 
$$
As we argued above, in this phase   the expected number of steps between updates is bounded by $O(n/(2^{k}c))$ (as we always have at least $2^{k-1}$ zeros). The expected number of steps in this phase is thus bounded by $O(n\log n /c)$, where the hidden constant holds uniformly for all phases. Since we assumed $c$ to be a positive constant, the expected number of steps per phase is $O(n\log n)$. As the number of phases is $O(\log n)$, the theorem follows.
\end{proof}

\newpage

\appendix

\section{Proof that $\Exp[D \mid \Ekeep] \leq \Exp[D]$.}
In this section we prove that $\Exp[D \mid \Ekeep] \leq \Exp[D]$, which is used in Section~\ref{sec:number:ones}. For a one-bit $j$, let $\B_j$ denote the event that this bit is flipped into zero. We can divide all potential offspring into pairs $y_0$, $y_1$ which agree in all bits except that $y_0^j = 0$, but $y_1^j=1$. Note that $\Pr[Y' = y_0 \mid \B_j] = \Pr[Y' = y_1 \mid \neg \B_j]$, because all bits are flipped independently. Moreover, since $f(y_0) < f(y_1)$, we have the implication ``$y_0$ is accepted $\implies$ $y_1$ is accepted''. Hence, 
\begin{align}\label{eq:condition_on_flip}
\Pr[\Ekeep \mid \B_j] & = \sum_{y_0 \in \{0,1\}^n, y_0^{j}=0} \Pr[Y' = y_0] \cdot \indicator_{\Ekeep}(y_0) \nonumber \\
& \leq \sum_{y_1 \in \{0,1\}^n, y_1^{j}=1} \Pr[Y' = y_1] \cdot \indicator_{\Ekeep}(y_1) = \Pr[\Ekeep \mid \neg \B_j],
\end{align}
and therefore, 
\begin{align*}
\Pr[\Ekeep] & = \Pr[\Ekeep \mid \B_j] \cdot \Pr[\B_j] + \Pr[\Ekeep \mid \neg\B_j] \cdot \Pr[ \neg\B_j] \\
 &\stackrel{\eqref{eq:condition_on_flip}}{\geq}  \Pr[\Ekeep \mid \B_j] \cdot \Pr[\B_j] + \Pr[\Ekeep \mid \B_j] \cdot (1-\Pr[\B_j]) = \Pr[\Ekeep \mid \B_j].
\end{align*}
In particular, this implies
\[
\Pr[\B_j \mid \Ekeep] = \frac{\Pr[\Ekeep \mid \B_j] \Pr[\B_j]}{\Pr[\Ekeep]} \leq \Pr[\B_j].
\]
Since this holds for all one-bits $j$, and since $D = \sum_j \indicator_{\B_j}$, where the sum runs over all one-bits $j$, we obtain $\Exp[D \mid \Ekeep] \leq \Exp[D] \leq c$.

\section{Missing details in the proof of Proposition~\ref{prop:entropy:update}}
Here we show that for $c<4/3$ and $n$ large enough, the product $a_{u,d}b_{u,d}$ is maximized either for $u=1$, $d=0$, or for $u=d=1$. As a reminder, we repeat the definitions of $a_{u,d}$ and $b_{u,d}$.
\begin{align*}
a_{u,d} & := \left(\frac{c}{n}\right)^{u+d}\left(1-\frac{c}{n}\right)^{n-u-d}.\\
b_{u,d} &:={n-k \choose u}{k+u \choose d}
\end{align*}

We first observe $a_{u,d} / a_{u-1,d-1} < \frac{17}9 \frac1{n^2}$ for all $u,d \geq 1$. Thus
$$
\frac{b_{u,d}}{b_{u-1,d-1}} = \frac{{n-k\choose u}}{{n-k\choose u-1}} \cdot\frac{{k+u\choose d}}{{k+u-1\choose d-1}} = \frac{n-k-u+1}{u}\cdot  \frac{k+u-d+1}{d} \le \frac{n^2}{ud}
$$
implies that $a_{u,d}b_{u,d} \le a_{u-1,d-1}b_{u-1,d-1}$ for all $u,d\ge 2$. Similarly, since for $u \geq 1$,
$$
\frac{b_{u,1}}{b_{u-1,1}} =\frac{{n-k\choose u}}{{n-k\choose u-1}}  \cdot\frac{{k+u\choose 1}}{{k+u-1\choose 1}}    =  \frac{n-k-u+1}{u}\cdot \frac{{k+u}}{{k+u-1}} \stackrel{k\ge d=1}\le \frac{3}{2u} \cdot n  
$$
we deduce that $a_{u,1}b_{u,1} \le a_{1,1}b_{1,1}$ for all  $u\ge 2$ and from 
$$
\frac{b_{1,d}}{b_{1,d-1}} = \frac{{k+1\choose d}}{{k+1\choose d-1}} =  \frac{k+1-d+1}{d}\le\frac{n}d
$$
we get $a_{1,d}b_{1,d} \le a_{1,1}b_{1,1}$ for all  $d\ge 2$. Together, this gives $a_{u,d}b_{u,d} \le a_{1,1}b_{1,1}$  for all $u,d \ge 1$. Finally, for $u \geq 2$ and $n$ sufficiently large,
$$
\frac{a_{u,0}b_{u,0}}{a_{u-1,0}b_{u-1,0}} =\frac{\frac{c}{n}}{(1-\tfrac{c}{n})}\cdot \frac{{n-k\choose u}}{{n-k\choose u-1}}    =  \frac{c}{n-c}\cdot \frac{n-k-u+1}{u} \le \frac{n}{n-c}\cdot \frac{c}{u} \leq 1,
$$
so $a_{u,0}b_{u,0} \le a_{1,0}b_{1,0}$  for all $u\ge 2$. Altogether, we thus have 
$$
a_{u,d}b_{u,d} \le \max\{a_{1,0}b_{1,0}, a_{1,1}b_{1,1}\} \qquad\text{for all $u\ge 1, d\ge 0$},
$$
as required.

\bibliographystyle{abbrv}
\bibliography{references}

\begin{thebibliography}{10}

\bibitem{achlioptas2016focused}
D.~Achlioptas and F.~Iliopoulos.
\newblock Focused stochastic local search and the Lov{\'a}sz local lemma.
\newblock In {\em Symposium on Discrete Algorithms (SODA)}, pages 2024--2038.
  SIAM, 2016.

\bibitem{achlioptas2016random}
D.~Achlioptas and F.~Iliopoulos.
\newblock Random walks that find perfect objects and the Lov{\'a}sz local
  lemma.
\newblock {\em Journal of the ACM}, 63(3):22, 2016.

\bibitem{back1996evolutionary}
T.~B{\"a}ck.
\newblock {\em Evolutionary algorithms in theory and practice: evolution
  strategies, evolutionary programming, genetic algorithms}.
\newblock Oxford university press, 1996.

\bibitem{back1997handbook}
T.~B{\"a}ck, D.~B. Fogel, and Z.~Michalewicz.
\newblock {\em Handbook of evolutionary computation}.
\newblock CRC Press, 1997.

\bibitem{bringmann2013succinct}
K.~Bringmann and K.~G. Larsen.
\newblock Succinct sampling from discrete distributions.
\newblock In {\em Symposium on Theory of Computing (STOC)}, pages 775--782.
  ACM, 2013.

\bibitem{brody2015adapt}
J.~Brody and K.~G. Larsen.
\newblock Adapt or die: Polynomial lower bounds for non-adaptive dynamic data
  structures.
\newblock {\em Theory of Computing}, 11(19):471--489, 2015.

\bibitem{cover2012elements}
T.~M. Cover and J.~A. Thomas.
\newblock {\em Elements of information theory}.
\newblock John Wiley \& Sons, 2012.

\bibitem{kotzingpersonal}
B.~Doerr, C.~Doerr, and T.~K{\"o}tzing.
\newblock personal communication.

\bibitem{DoerrJSWZ10}
B.~Doerr, T.~Jansen, D.~Sudholt, C.~Winzen, and C.~Zarges.
\newblock Optimizing monotone functions can be difficult.
\newblock In {\em Parallel Problem Solving from Nature (PPSN)}, 2010.

\bibitem{doerr2013mutation}
B.~Doerr, T.~Jansen, D.~Sudholt, C.~Winzen, and C.~Zarges.
\newblock Mutation rate matters even when optimizing monotonic functions.
\newblock {\em Evolutionary computation}, 21(1):1--27, 2013.

\bibitem{dujmovic2016nonrepetitive}
V.~Dujmovi{\'c}, G.~Joret, J.~Kozik, and D.~R. Wood.
\newblock Nonrepetitive colouring via entropy compression.
\newblock {\em Combinatorica}, 36(6):661--686, 2016.

\bibitem{esperet2013acyclic}
L.~Esperet and A.~Parreau.
\newblock Acyclic edge-coloring using entropy compression.
\newblock {\em European Journal of Combinatorics}, 34(6):1019--1027, 2013.

\bibitem{fortnow2009kolmogorow}
L.~Fortnow.
\newblock {A Kolmogorov Complexity Proof of the Lov{\'a}sz Local Lemma}.
\newblock Blogpost, 2009.
\newblock
  \url{https://blog.computationalcomplexity.org/2009/06/kolmogorov-complexity-proof-of-lov.html}.

\bibitem{grytczuk2013new}
J.~Grytczuk, J.~Kozik, and P.~Micek.
\newblock New approach to nonrepetitive sequences.
\newblock {\em Random Structures \& Algorithms}, 42(2):214--225, 2013.

\bibitem{jansen2007brittleness}
T.~Jansen.
\newblock On the brittleness of evolutionary algorithms.
\newblock In {\em Foundations of Genetic Algorithms (FOGA)}, pages 54--69.
  Springer, 2007.

\bibitem{larsen2012cell}
K.~G. Larsen.
\newblock The cell probe complexity of dynamic range counting.
\newblock In {\em Symposium on Theory of Computing (STOC)}, pages 85--94. ACM,
  2012.

\bibitem{lengler2018general}
J.~Lengler.
\newblock A general dichotomy of evolutionary algorithms on monotone functions.
\newblock In {\em Parallel Problem Solving from Nature (PPSN), full version at
  arXiv:1803.09227}, 2018.

\bibitem{lengler2016drift}
J.~{Lengler} and A.~{Steger}.
\newblock {Drift analysis and evolutionary algorithms revisited}.
\newblock {\em Combinatorics, Probability and Computing}, 27(4):643--666, 2018.

\bibitem{moser2009constructive}
R.~Moser.
\newblock A constructive proof of the Lov\'{a}sz local lemma.
\newblock In {\em Symposium on Theory of Computing (STOC)}, pages 343--350,
  2009.

\bibitem{moser2010constructive}
R.~Moser and G.~Tardos.
\newblock A constructive proof of the general Lov{\'a}sz local lemma.
\newblock {\em Journal of the ACM}, 57(2):11, 2010.

\bibitem{przybylo2016facial}
J.~Przyby{\l}o, J.~Schreyer, and E.~{\v{S}}krabul{\'a}kov{\'a}.
\newblock On the facial Thue choice number of plane graphs via entropy
  compression method.
\newblock {\em Graphs and Combinatorics}, 32(3):1137--1153, 2016.

\bibitem{tao2009moser}
T.~Tao.
\newblock {Moser's entropy compression argument}.
\newblock Blogpost, 2009.
\newblock
  \url{https://terrytao.wordpress.com/2009/08/05/mosers-entropy-compression-argument/}.

\bibitem{witt2013tight}
C.~Witt.
\newblock Tight bounds on the optimization time of a randomized search
  heuristic on linear functions.
\newblock {\em Combinatorics, Probability and Computing}, 22(2):294--318, 2013.

\end{thebibliography}
\end{document}